\theoremstyle{plain}
\newtheorem{Theorem}{Theorem}
\newtheorem{Lemma}[Theorem]{Lemma}
\newtheorem{Proposition}[Theorem]{Proposition}
\theoremstyle{definition}
\newtheorem{Definition}[Theorem]{Definition}
\newtheorem{Remark}[Theorem]{Remark}
\begin{document}
\title{Some notes on the algebraic structure of linear recurrent sequences}
\author {Gessica Alecci$^1$, Stefano Barbero$^1$, Nadir Murru$^2$
}

\address{$^1$ Department of Mathematical Sciences G. L. Lagrange, Politecnico di Torino}
\email{gessica.alecci@polito.it, stefano.barbero@polito.it}
\address{$^2$ Department of Mathematics, Università di Trento}
\email{nadir.murru@unitn.it}
\date{}

\begin{abstract}
Several operations can be defined on the set of all linear recurrent sequences, such as the binomial convolution (Hurwitz product) or the multinomial convolution (Newton product). Using elementary techniques, we prove that this set equipped with the termwise sum and the aforementioned products are $R-$algebras, given any commutative ring $R$ with identity. Moreover, we provide explicitly a characteristic polynomial of the Hurwitz product and Newton product of any two linear recurrent sequences. Finally, we also investigate whether these $R-$algebras are isomorphic, considering also the $R-$algebras obtained using the Hadamard product and the convolution product.
\end{abstract}

\maketitle
\section{Introduction}
Given a commutative ring with identity $R$, we will denote by $\mathcal S(R)$ the set of all sequences $\mathbf a = (a_n)_{n \geq 0}$ such that $a_n \in R$, for all $n \in \mathbb N$. A sequence $\mathbf a \in \mathcal S(R)$ is said to be a \emph{linear recurrent sequence} with characteristic polynomial $p_a(t) = t^N - \sum_{i=0}^{N-1} h_i t^{N-i}$ if its elements satisfy the following relation
\[a_n = \sum_{i=0}^{N-1} h_i a_{n-i}\]
for all $n \geq N$ and the elements $a_0, \ldots a_{N-1}$ are called \emph{initial conditions}. We will denote by $\mathcal W(R) \subset \mathcal S(R)$ the set of all linear recurrent sequences. Moreover, given $\mathbf a \in \mathcal S(R)$, we will write $A_o(t)=\sum_{n=0}^\infty a_nt^n$ for the ordinary generating function (o.g.f.) and $A_e(t)=\sum_{n=0}^\infty \cfrac{a_n}{n!}t^n$ for the exponential generating function (e.g.f.). It is well known that $\mathcal S(R)$ and $\mathcal W(R)$ can be equipped with several operations giving them interesting algebraic structures. 
When $R$ is a field, it is immediate to see that the element-wise sum or product (also called the Hadamard product) of two linear recurrent sequences is still a linear recurrent sequence, see, e.g., \cite{RSbook}. In \cite{CV}, the authors proved it in the more general case where $R$ is a ring, showing that $\mathcal W(R)$ is an $R-$algebra and giving also explicitly the characteristic polynomials of the element-wise sum and Hadamard product of two linear recurrent sequences. Larson and Taft \cite{LT, Taft} studied this algebraic structure characterizing the invertible elements and zero divisors. Further studies about the behaviour of linear recurrent sequences under the Hadamard product can be found, e.g., in \cite{Cak, GN, Kaz, Zierler}.  
Similarly, $\mathcal W(R)$ equipped with the element-wise sum and the convolution product (or Cauchy product) has been deeply studied. For instance,  $\mathcal W(R)$ is still an $R-$algebra and the characteristic polynomial of the convolution product between two linear recurrent sequences can be explicitly found \cite{CV}. The convolution product of linear recurrent sequences is very important in many applications and it has been studied also from a combinatorial point of view \cite{ABCM} and over finite fields \cite{Hau}. For other results, see, e. g., \cite{Stoll, Sza, Sza2}. 
Another important operation between sequences is the binomial convolution (or Hurwitz product). The Hurwitz series ring, introduced in a systematic way by Keigher \cite{Keigher}, has been extensively studied by several authors \cite{BCM2, BCM3, Ben, Ben2, Keigher2, Liu}. However, there are few results when focusing on linear recurrent sequences \cite{Kur, Kur2}. 

In this paper, we extend the studies about the algebraic structure of linear recurrent sequences considering in particular the Hurwitz product and the Newton product (which is the generalization of the Hurwitz product considering multinomial coefficients). In particular, we prove that $\mathcal W(R)$ is an $R-$algebra when equipped with the element-wise sum and the Hurwitz product, as well as when we consider element-wise sum and Newton product. We also give explicitly the characteristic polynomials of the Hurwitz and Newton product of two linear recurrent sequences. For the Newton product we find explicitly also the inverses. Moreover, we study the isomorphisms between these algebraic structures, finding that $\mathcal W(R)$ with element-wise sum and Hurwitz product is not isomorphic to the other algebraic structures, whereas if we consider the Newton product, there is an isomorphism with the $R-$algebra obtained using the Hadamard product. Finally, we provide an overview about the behaviour of linear recurrent sequences under all the different operations considered (element-wise sum, Hadamard product, Cauchy product, Hurwitz product, Newton product) with respect to the characteristic polynomials and their companion matrices.

\section{Preliminaries and notation}

For any $\mathbf a, \mathbf b \in \mathcal S(R)$, we will deal with the following operations:
\begin{itemize}
    \item \textit{componentwise sum} $+$, defined by
    \[ \mathbf a + \mathbf b = \mathbf c, \quad c_n = a_n + b_n, \quad \forall n \geq 0; \]
    \item \textit{componentwise product} or \textit{Hadamard product} $\odot$, defined by
    \[ \mathbf a \odot \mathbf b = \mathbf c, \quad c_n = a_n \cdot b_n, \quad \forall n \geq 0; \]
    \item \textit{convolution product} $*$, defined by
    \[ \mathbf a * \mathbf b = \mathbf c, \quad c_n = \sum_{i=0}^n a_i b_{n-i}, \quad \forall n \geq 0; \]
    \item \textit{binomial convolution product} or \textit{Hurwitz product} $\star$, defined by
    \[ \mathbf a \star \mathbf b = \mathbf c, \quad c_n =  \sum_{i=0}^n \binom{n}{i}a_i b_{n-i}, \quad \forall n \geq 0; \]
    \item \textit{multinomial convolution product} or \textit{Newton product} $\boxtimes$, defined by
    \[ \mathbf a \boxtimes \mathbf b = \mathbf c, \quad c_n =  \sum_{i=0}^n \sum_{j=0}^i \binom{n}{i} \binom{i}{j} a_i b_{n-j}, \quad \forall n \geq 0. \]
\end{itemize}

\begin{Remark}
The Newton product is also called multinomial convolution product, since it is the natural generalization of the binomial convolution product using the multinomial coefficient, observing that $\binom{n}{i}\binom{i}{j} = \binom{n}{n-i, i-j, j}$.
\end{Remark}

In \cite{CV}, the authors showed that $(\mathcal W(R), +, \odot)$ and $(\mathcal W(R), +, *)$ are $R-$algebras and they are never isomorphic. Moreover, given $\mathbf a, \mathbf b \in \mathcal W(R)$ and $\mathbf c = \mathbf a \odot \mathbf b$, $\mathbf d = \mathbf a * \mathbf b$, they proved that 
\begin{equation} \label{eq:polcd}
p_c(t) = p_a(t) \otimes p_b(t), \quad p_d(t) = p_a(t) \cdot p_b(t),
\end{equation}
where the operation $\otimes$ between polynomials is defined as follows. Given two polynomials $f(t)$ and $g(t)$ with coefficients in $R$, said $F$ and $G$ their companion matrices, respectively, then $f(t) \otimes g(t)$ is the characteristic polynomial of the Kronecker product between $F$ and $G$. In the following, we will denote by $\otimes$ also the Kronecker product between matrices. To the best of our knowledge, similar results involving the Hurwitz product and the Newton product are still missing.

\begin{Remark}
Let us observe that the sequences $\mathbf c$ and $\mathbf d$, defined above, recur with characteristic polynomials $p_c(t)$ and $p_d(t)$ as given in \eqref{eq:polcd}, respectively, but these polynomials are not necessarily the minimal polynomials of recurrence. Indeed, it is an hard problem to find the minimal polynomials of recurrence of these sequences, for some results, see \cite{Cak, GN, LT, Stoll}.
\end{Remark}

\begin{Lemma} \label{Lemma3}
Given $\mathbf a \in \mathcal S(R)$, we have that $\mathbf a \in \mathcal W(R)$ if and only if $p^*_a(t) \cdot A_o(t)$ is a polynomial of degree less than $\deg(p_a(t))$, where $p^*_a(t)$ denotes the reciprocal or reflected polynomial of $p_a(t)$. 
\end{Lemma}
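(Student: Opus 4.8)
The plan is to prove the equivalence by comparing, coefficient by coefficient, the formal power series $p_a^{*}(t)\cdot A_o(t)$ with the defining recurrence relation. Put $N=\deg(p_a(t))$ and recall that the reflected polynomial $p_a^{*}(t)=t^{N}p_a(1/t)$ has degree at most $N$ and constant term equal to the leading coefficient of $p_a(t)$; when $p_a$ is monic this constant term is $1$, so $p_a^{*}(t)$ is then a unit of $R[[t]]$. Since $A_o(t)=\sum_{n\ge 0}a_n t^{n}$, the product $Q(t):=p_a^{*}(t)\,A_o(t)$ is a well-defined element of $R[[t]]$, each of whose coefficients is a finite sum; thus the assertion ``$Q(t)$ is a polynomial of degree less than $N$'' means exactly that the coefficient of $t^{n}$ in $Q(t)$ vanishes for every $n\ge N$.

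The heart of the argument is the elementary observation that, for every $n\ge N$, the coefficient of $t^{n}$ in $Q(t)=p_a^{*}(t)A_o(t)$ equals the ``defect''
\[
a_{n}-\sum_{i=0}^{N-1}h_i\,a_{n-i}
\]
of the recurrence relation at index $n$. Indeed this is just the expansion of the Cauchy product, and since $n\ge N$ every shifted index occurring is nonnegative, so no truncation takes place and one recovers the full recurrence expression. Hence all the coefficients of $Q(t)$ in degrees $\ge N$ vanish (equivalently, $Q(t)$ is a polynomial of degree less than $N$) if and only if $a_{n}=\sum_{i=0}^{N-1}h_i a_{n-i}$ for every $n\ge N$, that is, if and only if $\mathbf a$ recurs with characteristic polynomial $p_a(t)$.

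From this both implications are immediate. If $\mathbf a\in\mathcal W(R)$, take $p_a(t)$ to be a characteristic polynomial of $\mathbf a$: the recurrence holds for all $n\ge N$, hence every coefficient of $Q(t)$ of degree $\ge N$ vanishes and $p_a^{*}(t)A_o(t)$ is a polynomial of degree less than $\deg(p_a(t))$. Conversely, if $p_a^{*}(t)A_o(t)$ is a polynomial of degree less than $N=\deg(p_a(t))$, then its coefficient of $t^{n}$ is zero for all $n\ge N$, which by the displayed identity is precisely the relation $a_{n}=\sum_{i=0}^{N-1}h_i a_{n-i}$, so $\mathbf a\in\mathcal W(R)$. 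I do not expect a genuine obstacle here: the only points demanding care are the index bookkeeping in the Cauchy-product computation and spelling out what ``is a polynomial'' should mean for an element of $R[[t]]$. It is worth recording that, when $p_a^{*}(t)$ is a unit of $R[[t]]$, the criterion may equivalently be read as the classical rational generating function characterization, namely $A_o(t)=q(t)/p_a^{*}(t)$ for some polynomial $q(t)$ with $\deg q<\deg p_a$.
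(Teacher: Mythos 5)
Your proof is correct: the coefficient-by-coefficient comparison showing that, for $n \geq \deg(p_a)$, the coefficient of $t^n$ in $p_a^*(t)A_o(t)$ is exactly the recurrence defect $a_n - \sum_i h_i a_{n-i}$ is the standard argument, and both implications follow from it as you say. The paper itself gives no proof and simply cites \cite{CV}; your self-contained argument is essentially the same computation that underlies the cited result, so there is nothing to add beyond noting that your remark on $p_a^*(t)$ being a unit of $R[[t]]$ (hence the rational-function reading $A_o(t)=q(t)/p_a^*(t)$) is a pleasant but inessential supplement.
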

\begin{proof}
See \cite{CV}.
\end{proof}

\begin{Definition}
Given two monic polynomials $f(t)$ and $g(t)$ of degree $M$ and $N$, respectively, their resultant is $\text{res}(f(t), g(t)) := \prod_{i=1}^M \prod_{j=1}^N(\alpha_i - \beta_j)$, where $\alpha_i$'s and $\beta_j$'s are the roots of $f(t)$ and $g(t)$, respectively.
\end{Definition}

\section{R-algebras of linear recurrent sequences}

\begin{Theorem} \label{TeoHurwitz}
Given $\mathbf a, \mathbf b \in \mathcal W(R)$, we have that $\mathbf r = \mathbf a \star \mathbf b \in \mathcal W(R)$ and the characteristic polynomial of $\mathbf r$ is $\text{res}(p_a(x),p_b(t-x))$ with $p_b(t-x)$ regarded as a polynomial in t. Moreover, $(\mathcal W(R), +, \star )$ is an $R-$algebra.
\end{Theorem}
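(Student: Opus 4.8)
The plan is to realise the Hurwitz product at the level of companion matrices. Recall the standard fact that a sequence $\mathbf a\in\mathcal W(R)$ with characteristic polynomial $p_a(t)$ of degree $N$ can be written as $a_n=u_a^{\mathsf T}A^{n}w_a$ for all $n\ge 0$, where $A$ is the companion matrix of $p_a$ and $u_a,w_a\in R^{N}$ depend only on $p_a$ and on the initial conditions (one may take $u_a$ to be the first standard basis vector and $w_a$ the vector of initial conditions); similarly $b_m=u_b^{\mathsf T}B^{m}w_b$ with $B$ the $M\times M$ companion matrix of $p_b$. Substituting into $r_n=\sum_{i=0}^{n}\binom{n}{i}a_i b_{n-i}$ and using the mixed-product rule $(Xx)\otimes(Yy)=(X\otimes Y)(x\otimes y)$ together with $(u^{\mathsf T}x)(v^{\mathsf T}y)=(u\otimes v)^{\mathsf T}(x\otimes y)$, one obtains
\[
r_n=(u_a\otimes u_b)^{\mathsf T}\Bigl(\sum_{i=0}^{n}\binom{n}{i}(A\otimes I_M)^{i}(I_N\otimes B)^{n-i}\Bigr)(w_a\otimes w_b).
\]
Since $A\otimes I_M$ and $I_N\otimes B$ commute, the binomial theorem (valid over any commutative ring, as the $\binom{n}{i}$ are integers) collapses the inner sum to $(A\otimes I_M+I_N\otimes B)^{n}$, so $r_n=(u_a\otimes u_b)^{\mathsf T}M_0^{n}(w_a\otimes w_b)$ with $M_0:=A\otimes I_M+I_N\otimes B$, the Kronecker sum of $A$ and $B$.

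Set $\chi(t):=\det(tI_{NM}-M_0)$, a monic polynomial of degree $NM$ with coefficients in $R$. By the Cayley--Hamilton theorem $\chi(M_0)=0$; multiplying this identity by $M_0^{n-NM}$ for $n\ge NM$ and sandwiching it between $(u_a\otimes u_b)^{\mathsf T}$ and $(w_a\otimes w_b)$ shows that $\mathbf r$ satisfies the linear recurrence with characteristic polynomial $\chi(t)$, so $\mathbf r\in\mathcal W(R)$. It then remains to identify $\chi(t)$ with $\text{res}\bigl(p_a(x),p_b(t-x)\bigr)$, the resultant in the variable $x$, seen as a polynomial in $t$. Over an algebraically closed field this is immediate: simultaneously triangularising $A$ and $B$ shows that the eigenvalues of $M_0$ are the pairwise sums $\alpha_i+\beta_j$ of the roots of $p_a$ and $p_b$, so $\chi(t)=\prod_{i,j}(t-\alpha_i-\beta_j)$, while eliminating $x$ gives $\text{res}\bigl(p_a(x),p_b(t-x)\bigr)=\prod_i p_b(t-\alpha_i)=\prod_{i,j}(t-\alpha_i-\beta_j)$ (up to the sign fixed by the normalisation of the resultant). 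For a general commutative ring $R$ one passes to the universal situation: coefficient by coefficient, both $\chi(t)$ and $\text{res}(p_a(x),p_b(t-x))$ are fixed polynomials with integer coefficients in the coefficients of $p_a$, the coefficients of $p_b$, and $t$; since they agree on the Zariski-dense locus of complex parameters for which $p_a$ and $p_b$ have simple roots, they agree as polynomials over $\mathbb Z$, hence over $R$ after specialisation.

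For the $R$-algebra statement it is convenient to recall that $\mathcal S(R)$ with $+$ and $\star$ is already a commutative unital ring, namely the Hurwitz series ring of \cite{Keigher}, with unit $\mathbf e=(1,0,0,\dots)$, and that $\star$ is visibly $R$-bilinear from its defining formula. It therefore suffices to observe that $\mathcal W(R)$ is an $R$-submodule of $\mathcal S(R)$ containing $\mathbf e$ and closed under $\star$: closure under scalar multiplication is clear since it leaves the characteristic polynomial unchanged; closure under $+$ holds because $\mathbf a$ and $\mathbf b$ both recur with $p_a(t)p_b(t)$, hence so does $\mathbf a+\mathbf b$ (cf. \cite{CV}); $\mathbf e$ recurs with $t$; and closure under $\star$ is exactly the first assertion of the theorem. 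Thus $(\mathcal W(R),+,\star)$ is a commutative unital $R$-algebra. I expect the only real obstacle to be the identification of the characteristic polynomial of the Kronecker sum $M_0$ with the resultant over an arbitrary base ring --- that is, making the eigenvalue computation rigorous via the specialisation argument and pinning down the sign convention --- while the passage through Cayley--Hamilton and the verification of the algebra axioms are routine.
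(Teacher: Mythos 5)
Your proof is correct, but it takes a genuinely different route from the paper's. The paper argues analytically with ordinary generating functions: it expands $R_o(t)$, decomposes $B_o(t)$ into partial fractions over the (assumed distinct) roots $\beta_j$ of $p_b$, rewrites the inner sums as $A_o\bigl(\tfrac{t}{1-\beta_j t}\bigr)$, and shows by direct manipulation that $p^*(t)R_o(t)$ is a polynomial of degree at most $MN-1$ for $p(t)=\text{res}(p_a(x),p_b(t-x))$, so that Lemma \ref{Lemma3} yields the conclusion; the algebra structure then follows, exactly as in your argument, from Keigher's result that $(\mathcal S(R),+,\star)$ is an $R$-algebra together with the closure just proved. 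You instead realise the two sequences through companion matrices, collapse the binomial convolution via the commuting-matrices binomial theorem into powers of the Kronecker sum $A\otimes I+I\otimes B$, invoke Cayley--Hamilton to obtain a recurrence whose characteristic polynomial is $\chi(t)=\det\bigl(tI-(A\otimes I+I\otimes B)\bigr)$, and then identify $\chi$ with the resultant by a universal-coefficients/specialisation argument. What your approach buys: it works verbatim over an arbitrary commutative ring and for arbitrary (possibly repeated) roots, whereas the paper assumes distinct roots and implicitly works in an extension where the partial-fraction manipulations make sense, relegating the general case to Remark \ref{rem:hurwitz}; it also makes explicit the companion-matrix description $A\otimes I_n+I_m\otimes B$ that the paper only records afterwards, in its discussion of the semiring $(\mathcal Pol(R),\cdot,\star)$. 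What the paper's route buys is an explicit handle on the generating function of $\mathbf a\star\mathbf b$ itself. Two small points on your write-up: your use of $\mathbf e$ for the Hurwitz unit $(1,0,0,\dots)$ clashes with the paper's $\mathbf e=((-1)^n)_{n\geq 0}$; and the sign/normalisation of $\text{res}(p_a(x),p_b(t-x))$ (note $p_b(t-x)$ is not monic in $x$ when $\deg p_b$ is odd) does need the normalisation you flag, although the paper is equally cavalier here, simply taking the resultant to be $\prod_{h,l}(t-\alpha_h-\beta_l)$.
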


\begin{proof}
It is well--known that $(\mathcal S(R), +, \star)$ is an $R-$algebra (see, e.g., \cite{Keigher}), thus it is sufficient to show that $\mathcal W(R)$ is closed under the Hurwitz product for proving that $(\mathcal W(R), +, \star )$ is an $R-$algebra.

Let $M$ and $N$ be the degrees of $p_a(t)$ and $p_b(t)$, respectively. Let us suppose $p_a(t)$ and $p_b(t)$ have distinct roots denoted by $\alpha_1, \ldots, \alpha_M$ and $\beta_1, \ldots, \beta_N$, respectively. We consider the ordinary generating function of the sequence $\mathbf r = \mathbf a \star \mathbf b$,
\begin{align}\label{eq:primaformula}
  R_o(t)=  \sum_{n=0}^{+\infty} \left( \sum_{i=0}^{n}\binom{n}{i}a_ib_{n-i}\right)t^n &=\sum_{i=0}^{+\infty}  \sum_{n=i}^{+\infty}\binom{n}{i}a_ib_{n-i} t^n \nonumber \\
  &=\sum_{i=0}^{+\infty}a_it^i\sum_{n=i}^{+\infty}\binom{n}{i}b_{n-i} t^{n-i} \nonumber \\
  &= \sum_{i=0}^{+\infty}a_it^i\sum_{m=0}^{+\infty}\binom{m+i}{i}b_{m} t^{m}
\end{align}
where $\sum_{m=0}^{+\infty}\binom{m+i}{i}b_{m} t^{m}$ is the Hadamard product between the sequence $\mathbf b$ and $\left( \binom{m+i}{i} \right)_{m \geq 0}$, i.e., $\left(\sum_{m=0}^{+\infty}b_{m} t^m\right) \odot \left(\sum_{m=0}^{+\infty}\binom{m+i}{i}t^{m}\right)= B_o(t) \odot \cfrac{1}{{(1-t)}^{i+1}}$.
Since $B_o(t)$ is the ordinary generating function of the linear recurrent sequence $\mathbf b$, it is a rational function and we can write it as 
\[B_o(t)=\frac{\gamma(t)}{p^*_b(t)}=\sum_{j=1}^{N}\frac{c_j}{(1-\beta_jt)},\]
for some integers $c_j$'s. Now, we have
\begin{equation*}
    \frac{1}{1-\beta_jt} \odot \frac{1}{(1-t)^{i+1}}= \sum_{m=0}^{+\infty}(b_{j} t)^m \odot \sum_{m=0}^{+\infty}\binom{m+i}{i} t^{m} \\
    = \sum_{m=0}^{+\infty}\binom{m+i}{i}(b_{j} t)^{m} \\
    = \frac{1}{(1-\beta_jt)^{i+1}}
\end{equation*}
and we get that 
$$B_o(t) \odot \cfrac{1}{(1-t)^{i+1}}= \sum_{j=1} ^{N} c_j \cfrac{1}{1-\beta_jt} \odot \cfrac{1}{(1-t)^{i+1}}=\sum_{j=1} ^{N} \cfrac{c_j}{(1-\beta_jt)^{i+1}}.$$ 
Thus, from \eqref{eq:primaformula} we obtain
\begin{align}\label{eq:secondaformula}
R_o(t) = \sum_{i=0}^{+\infty}a_it^i\sum_{j=1} ^{N} \frac{c_j}{(1-\beta_jt)^{i+1}} &= \sum_{j=1} ^{N} \frac{c_j}{1-\beta_jt} \sum_{i=0}^{+\infty}a_i \frac{t^i}{(1-\beta_jt)^{i}} \nonumber \\
&= \sum_{j=1} ^{N} \frac{c_j}{1-\beta_jt} A_o \left( \frac{t}{1-\beta_jt}\right) \nonumber \\
&= \sum_{j=1} ^{N} \frac{c_j}{1-\beta_jt} \frac{\delta \left( \frac{t}{1-\beta_jt}\right)}{p^*_a \left( \frac{t}{1-\beta_jt}\right)} 
\end{align}
Let $p(t)=\text{res}(p_a(x), p_b(t-x))$, then ${\displaystyle p(t)= \prod_{h=1}^{M} \prod_{l=1}^{N} (t-\alpha_h -\beta_l} )$ and its reciprocal polynomial is ${\displaystyle p^*(t)= \prod_{h=1}^{M} \prod_{l=1}^{N} (1-(\alpha_h + \beta_l)t)}$. In particular, it is possible to rearrange the last formula in the following way
\begin{align}\label{eq:reciprocalres}
  p^*(t)= \prod_{h=1}^{M} \prod_{l=1}^{N} (1-\beta_lt -\alpha_ht)  &= \prod_{h=1}^{M} \prod_{l=1}^{N} (1-\beta_lt) \left (1-\frac{\alpha_ht}{1-\beta_lt} \right) \nonumber \\
  &= \prod_{h=1}^{M} \prod_{l=1}^{N} (1-\beta_lt) \prod_{l=1}^{N} \left (1-\frac{\alpha_ht}{1-\beta_lt} \right) \nonumber \\
  &= \prod_{h=1}^{M} p^*_b(t) \prod_{l=1}^{N} \left (1-\frac{\alpha_ht}{1-\beta_lt} \right) \nonumber \\
    &= \left [p^*_b(t) \right ]^M  \prod_{l=1}^{N} \prod_{h=1}^{M} \left (1-\frac{\alpha_ht}{1-\beta_lt} \right) \nonumber \\
    &= \left [p^*_b(t) \right ]^M  \prod_{l=1}^{N} p^*_a\left (\frac{t}{1-\beta_lt} \right)
\end{align}
Combining \eqref{eq:secondaformula} and \eqref{eq:reciprocalres} we get
\begin{align}\label{eq:3}
p^*(t) \cdot R_o(t) &= \left [p^*_b(t) \right ]^M  \prod_{l=1}^{N} p^*_a\left (\frac{t}{1-\beta_lt} \right) \left (  \sum_{j=1} ^{N} \frac{c_j}{(1-\beta_jt)} \frac{\delta \left( \frac{t}{1-\beta_jt}\right)}{p^*_a \left( \frac{t}{1-\beta_jt}\right)} \right) \nonumber \\
  &= \left [p^*_b(t) \right ]^M  \left ( \sum_{j=1} ^{N} \frac{c_j}{1-\beta_jt} \cdot \delta \left( \frac{t}{1-\beta_jt}\right) \right )  \cdot \prod_{\substack{ l=1 \\ l\neq j} }^{N} p^*_a\left (\frac{t}{1-\beta_lt} \right)  
\end{align}
Moreover the function $\delta \left( \frac{t}{1-\beta_jt}\right)$ can be written in the following way 
\begin{equation*}\label{eq:delta}
\delta \left( \frac{t}{1-\beta_jt}\right) = \sum_{h=0}^{M-1} \delta_h \cdot \left( \frac{t}{1-\beta_j t}\right )^h \nonumber \\
= \frac{\sum_{h=0}^{M-1} \delta_h t^h ( 1-\beta_j t )^{M-1-h}}{( 1-\beta_j t )^{M-1}} \nonumber \\
= \frac{\mu_j(t)}{( 1-\beta_j t )^{M-1}}
\end{equation*}
with $deg(\mu_j(t)) \leq M-1$. Applying the same reasoning,
\begin{equation}\label{eq:funcf}
p^*_a\left( \frac{t}{1-\beta_jt}\right)
= \frac{\sum_{h=0}^{M} f_h t^h ( 1-\beta_j t )^{M-h}}{( 1-\beta_j t )^{M}}  \nonumber \\
= \frac{\xi_j(t)}{( 1-\beta_j t )^{M}}
\end{equation}
with $deg(\xi_j(t)) \leq M$.
Hence, equation \eqref{eq:3} becomes
\begin{align}\label{eq:funcf2}
& \left [p^*_b(t) \right ]^M \sum_{j=1} ^{N} \frac{c_j}{1-\beta_jt} \cdot \frac{\mu_j(t)}{( 1-\beta_j t )^{M-1}}   \cdot \prod_{\substack{ l=1 \\ l\neq j} }^{N} \frac{\xi_j(t)}{( 1-\beta_j t )^{M}} \nonumber \\
&= \left [p^*_b(t) \right ]^M  \sum_{j=1} ^{N} c_j\cdot \frac{\mu_j(t)}{( 1-\beta_j t )^{M}} \frac{{\displaystyle \prod_{\substack{ l=1 \\ l\neq j} }^{N} \xi_j(t)}}{{\displaystyle \prod_{\substack{ l=1 \\ l\neq j} }^{N} ( 1-\beta_j t )^{M}}}    \nonumber \\
&= \left [p^*_b(t) \right ]^M  \sum_{j=1} ^{N} c_j \mu_j(t) \frac{{\displaystyle \prod_{\substack{ l=1 \\ l\neq j} }^{N} \xi_j(t)}}{\left [p^*_b(t) \right ]^M}   =\sum_{j=1} ^{N} c_j \mu_j(t) {\displaystyle \prod_{\substack{ l=1 \\ l\neq j} }^{N} \xi_j(t)}.
\end{align}
Since the degree of the polynomial \eqref{eq:funcf} is less than or equal to $MN-1$, by Lemma \ref{Lemma3}, then $\mathbf r$ is a linear recurrent sequence and $p(t)=\text{res}(p_a(x), p_b(t-x))$ is its characteristic polynomial, as desired.
\end{proof}

\begin{Remark} \label{rem:hurwitz}
Given $\mathbf a, \mathbf b \in \mathcal W(R)$, if $\alpha_1, \ldots \alpha_M$ and $\beta_1, \ldots, \beta_N$ are distinct roots of $p_a(t)$ and $p_b(t)$ respectively, then, by Theorem \ref{TeoHurwitz}, the roots of the characteristic polynomial of $\mathbf a \star \mathbf b$ are $\alpha_i + \beta_j$, for any $i = 1, \ldots, M$ and $j = 1, \ldots, N$. Moreover, we would like to highlight that the proof of Theorem \ref{TeoHurwitz} can be adapted also in the case of multiple roots.
\end{Remark}

\begin{Proposition} \label{def2newton}
Given $\mathbf a, \mathbf b \in \mathcal W(R)$, then $\mathbf a \boxtimes \mathbf b = [(\mathbf a \star \mathbf 1) \odot (\mathbf b \star \mathbf 1)] \star \mathbf{e}$, where $\mathbf 1 := (1, 1, 1, \ldots)$ and $\mathbf{e} := ((-1)^n)_{n \geq 0}$.
\end{Proposition}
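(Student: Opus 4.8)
The plan is to prove the stated equality termwise, as an identity of sequences in $\mathcal S(R)$, exploiting that \emph{both} sides are $R$-bilinear in the pair $(\mathbf a,\mathbf b)$ and that, for each fixed $n$, the $n$-th term of either side involves only $a_0,\dots,a_n$ and $b_0,\dots,b_n$. (All the operations on the right are defined on the whole of $\mathcal S(R)$; and since $\mathbf 1$ and $\mathbf e$ recur with $t-1$ and $t+1$, Theorem~\ref{TeoHurwitz} together with \eqref{eq:polcd} shows the right-hand side actually lies in $\mathcal W(R)$, so that once the identity is established one also gets $\mathbf a\boxtimes\mathbf b\in\mathcal W(R)$.) Thus, fixing $n$, the difference of the $n$-th terms of the two sides is an $R$-bilinear form $\sum_{0\le i,m\le n}\lambda_{i,m}\,a_i b_m$ whose coefficients $\lambda_{i,m}$ are \emph{integers depending only on $n$} (they are built from binomial coefficients and signs, and do not depend on $R$). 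To prove the Proposition it therefore suffices to show all $\lambda_{i,m}$ vanish, and for that we may base change to $R=\mathbb Q$ and test on a convenient family of sequences.

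First I would evaluate both sides on geometric sequences. Write $\mathbf a^{x}:=(x^k)_{k\ge 0}$ for $x\in R$, so that $\mathbf 1=\mathbf a^{1}$ and $\mathbf e=\mathbf a^{-1}$. Straight from the definitions, $\mathbf a^{x}\star\mathbf 1=\mathbf a^{x+1}$ (since $(\mathbf a^{x}\star\mathbf 1)_k=\sum_{i=0}^k\binom{k}{i}x^i=(1+x)^k$), $\mathbf a^{x}\odot\mathbf a^{y}=\mathbf a^{xy}$, and $\mathbf a^{x}\star\mathbf e=\mathbf a^{x-1}$ (since $(\mathbf y\star\mathbf e)_n=\sum_{k=0}^n\binom{n}{k}(-1)^{n-k}y_k$). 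Combining these,
\[
\bigl[(\mathbf a^{\alpha}\star\mathbf 1)\odot(\mathbf a^{\beta}\star\mathbf 1)\bigr]\star\mathbf e
=\mathbf a^{(1+\alpha)(1+\beta)}\star\mathbf e
=\mathbf a^{(1+\alpha)(1+\beta)-1}=\mathbf a^{\alpha+\beta+\alpha\beta}.
\]
On the other hand, using $\sum_{j=0}^i\binom{i}{j}\beta^{i-j}=(1+\beta)^i$,
\[
(\mathbf a^{\alpha}\boxtimes\mathbf a^{\beta})_n=\sum_{i=0}^n\binom{n}{i}\alpha^i\sum_{j=0}^i\binom{i}{j}\beta^{n-j}
=\sum_{i=0}^n\binom{n}{i}\bigl(\alpha(1+\beta)\bigr)^i\beta^{n-i}
=\bigl(\alpha+\beta+\alpha\beta\bigr)^n,
\]
i.e. $\mathbf a^{\alpha}\boxtimes\mathbf a^{\beta}=\mathbf a^{\alpha+\beta+\alpha\beta}$ as well. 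Hence the two sides of the Proposition coincide on every pair of geometric sequences.

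To conclude, I would feed this back into the reduction. Over $R=\mathbb Q$, for the fixed $n$ choose distinct $\alpha_0,\dots,\alpha_n\in\mathbb Q$; by the Vandermonde determinant the truncations $(\alpha_s^k)_{0\le k\le n}$, $s=0,\dots,n$, form a basis of $\mathbb Q^{n+1}$. The $n$-th-term difference, being an $R$-bilinear form in $(a_0,\dots,a_n)$ and $(b_0,\dots,b_n)$, is determined by its values on pairs of basis vectors, and by the previous paragraph it vanishes on all pairs of geometric truncations; hence $\lambda_{i,m}=0$ for all $i,m$. Since these integers do not depend on the base ring, the identity holds over every commutative ring $R$.

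I do not foresee a genuine obstacle here: the geometric-sequence computation is a one-line binomial manipulation, and the bilinearity/finitariness reduction is routine bookkeeping. The single point that must be stated carefully is that the coefficients of the termwise difference are universal integers, which is what legitimizes reducing to $\mathbb Q$ (where the geometric truncations span). One could instead prove the Proposition by directly evaluating the binomial sum $\sum_{k}\binom{n}{k}(-1)^{n-k}\binom{k}{i}\binom{k}{m}$ — for instance via $\binom{n}{k}\binom{k}{i}=\binom{n}{i}\binom{n-i}{k-i}$, Vandermonde, and the fact that $\sum_{q}\binom{N}{q}(-1)^q$ equals $1$ if $N=0$ and $0$ otherwise, obtaining $\binom{n}{i}\binom{i}{n-m}$ — but the route through geometric sequences is shorter and more transparent.
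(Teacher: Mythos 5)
Your proof is correct, but it follows a genuinely different route from the paper. The paper proves the identity by a direct binomial manipulation: it writes the $n$-th term of $[(\mathbf a \star \mathbf 1)\odot(\mathbf b\star\mathbf 1)]\star\mathbf e$, applies Newton's (binomial) inversion formula to reduce the claimed equality to the identity $\sum_{i=0}^n\binom{n}{i}d_i=c_n$, and then verifies this by reindexing a triple sum and invoking Vandermonde's identity. You instead exploit universality: for fixed $n$ both sides are $R$-bilinear in $(a_0,\dots,a_n)$ and $(b_0,\dots,b_n)$ with coefficients that are universal integers, so it suffices to check the identity over $\mathbb Q$ on a spanning family; on geometric sequences $\mathbf a^{\alpha}=(\alpha^k)_{k\ge0}$ both sides collapse (correctly, by your one-line computations $\mathbf a^{x}\star\mathbf 1=\mathbf a^{x+1}$, $\mathbf a^{x}\odot\mathbf a^{y}=\mathbf a^{xy}$, $\mathbf a^{x}\star\mathbf e=\mathbf a^{x-1}$) to $\mathbf a^{\alpha+\beta+\alpha\beta}$, and the Vandermonde-determinant spanning argument then kills all coefficients, so the identity holds over every commutative ring. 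What your approach buys is a shorter and more conceptual argument that avoids the paper's somewhat delicate multiple-sum reindexing, makes transparent why the ``eigenvalue'' $\alpha+\beta+\alpha\beta$ appears (which the paper only extracts afterwards in the theorem on the Newton product), and shows at no extra cost that the identity holds on all of $\mathcal S(R)$, not just $\mathcal W(R)$; what the paper's computation buys is an explicit closed-form verification entirely inside $R$, with the binomial-inversion step reusable verbatim in Proposition~\ref{inversinewton}. Your closing remark about the direct evaluation of $\sum_k\binom{n}{k}(-1)^{n-k}\binom{k}{i}\binom{k}{m}$ is essentially a coefficientwise version of the paper's computation, so you have both routes available; the remark that the right-hand side lies in $\mathcal W(R)$ is not needed for the Proposition itself, but it is harmless and anticipates how the paper later uses the identity.
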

\begin{proof}
The $n$-th terms of $\mathbf a \star \mathbf 1$ and $\mathbf b \star \mathbf 1$ are by definition $\sum_{i=0}^n \binom{n}{i}a_i$ and $\sum_{i=0}^n \binom{n}{i}b_i$, respectively. Thus, the $n$--th term of $(\mathbf a \star \mathbf 1) \odot (\mathbf b \star \mathbf 1)$ is $\sum_{s=0}^n \binom{n}{s}a_s \sum_{t=0}^n \binom{n}{t}b_t$, and thus $\sum_{i=0}^n \binom{n}{i}(-1)^{n-i}\sum_{s=0}^i \binom{i}{s}a_s \sum_{t=0}^i \binom{i}{t}b_t$ is the $n$--th term of $[(\mathbf a \star \mathbf 1) \odot (\mathbf b \star \mathbf 1)] \star \mathbf{e}$. \\
From the definition of Newton product, we want to prove the following equality
\begin{equation*}
 \sum_{i=0}^n \binom{n}{i}(-1)^{n-i}\sum_{s=0}^i \binom{i}{s}a_s \sum_{t=0}^i \binom{i}{t}b_t = \sum_{i=0}^n \sum_{j=0}^i \binom{n}{i} \binom{i}{j} a_i b_{n-j}, \quad \forall n \geq 0. 
\end{equation*}

Let $c_i= \sum_{s=0}^i \binom{i}{s}a_s \sum_{t=0}^i \binom{i}{t}b_t$ and $d_n= \sum_{i=0}^n \sum_{j=0}^i \binom{n}{i}\binom{i}{j}a_ib_{n-j}$, then the previous identity is equivalent to
\begin{equation}\label{uguaglianzaNewton2}
     \sum_{i=0}^n \binom{n}{i}  (-1)^ic_i=(-1)^n d_n.
\end{equation}

Exploiting the Newton's inversion formula, i.e., 
$$\sum_{i=0}^n \binom{n}{i}  (-1)^i f(i) = g(n) \Leftrightarrow f(n)=\sum_{i=0}^n \binom{n}{i}  (-1)^i g(i),$$
for some arithmetic functions $f$ and $g$, equation \eqref{uguaglianzaNewton2} becomes
\begin{equation*}
 \sum_{i=0}^n \binom{n}{i}  (-1)^i (-1)^i d_i = c_n   
\end{equation*}
 that is
\begin{equation*}
    \sum_{i=0}^n \binom{n}{i} \sum_{k=0}^i \sum_{j=0}^k \binom{i}{k}\binom{k}{j}a_kb_{i-j} = \sum_{s=0}^n \binom{n}{s}a_s \sum_{t=0}^n \binom{n}{t}b_t.
\end{equation*}
Now, we can write the first member as
\begin{align*}
 \sum_{j=0}^n \sum_{s=j}^n \sum_{i=s}^n \binom{n}{i}  \binom{i}{s}  \binom{s}{j}  a_sb_{i-j} &= \sum_{s=0}^n \sum_{j=0}^s \sum_{t=s-j}^{n-j} \binom{n}{t+j}  \binom{t+j}{s}  \binom{s}{j}  a_sb_t \nonumber \\
    &= \sum_{s=0}^n \binom{n}{s} a_s \sum_{j=0}^s \sum_{t=s-j}^{n-j}   \binom{n-s}{n-t-j}  \binom{s}{j}  b_t \nonumber \\
    &= \sum_{s=0}^n \binom{n}{s} a_s \sum_{m=0}^s \sum_{t=m}^{n-s+m}   \binom{n-s}{n-s-(t-m)}  \binom{s}{s-m}  b_t \nonumber \\
    &= \sum_{s=0}^n \binom{n}{s} a_s \sum_{m=0}^s \sum_{t=m}^{n-s+m}   \binom{n-s}{t-m}  \binom{s}{m}  b_t \nonumber \\
 &= \sum_{s=0}^n \binom{n}{s} a_s \sum_{t=0}^n b_t \sum_{m=0}^{t}   \binom{n-s}{t-m}  \binom{s}{m}   \nonumber \\
    &= \sum_{s=0}^n \binom{n}{s} a_s \sum_{t=0}^n \binom{n}{t} b_t \nonumber \\
\end{align*}
where the last equivalence is due to the Vandermonde's identity $\sum_{m=0}^{t}   \binom{n-s}{t-m}  \binom{s}{m} = \sum_{t=0}^n \binom{n}{t}$.
\end{proof}

\begin{Remark}
The previous proposition can be proved also exploiting the umbral calculus (see \cite{Rota} for the basic notions). Given $\mathbf a, \mathbf b \in \mathcal S(R)$, let us consider two linear functionals $U$ and $V$ defined by $U(W^n) = a_n$ and $V(Z^n) = b_n$. The $n$--th term of $(\mathbf a \boxtimes \mathbf b) \star \mathbf 1$ is $\sum_{i=0}^n \binom{n}{i}\sum_{k=0}^i\sum_{j=0}^k \binom{i}{k}\binom{k}{j}a_kb_{i-j}$ and, applying the functionals $U$ and $V$, it becomes
\[UV\left(\sum_{i=0}^n \binom{n}{i} Z^i\sum_{k=0}^i \binom{i}{k}W^k\sum_{j=0}^k\binom{k}{j}Z^{-j}\right) = UV\left( \sum_{i=0}^n \binom{n}{i} Z^i \sum_{k=0}^i \binom{i}{k} (W+W/Z)^k \right)\]
\[= UV\left(\sum_{i=0}^n \binom{n}{i}(ZW+W+Z)^i\right) = UV((ZW+W+Z+1)^n) \]
Now, the last quantity can be rewritten as
\[ UV((Z+1)^n(W+1)^n) = U(V(Z+1)^n(W+1)^n) = \]
\[ = U\left(V\left(\sum_{s=0}^n\binom{n}{s}Z^s\right)(W+1)^n\right) = U\left( \sum_{s=0}^n \binom{n}{s} b_s (W+1)^n \right) = \]
\[= \sum_{s=0}^n\binom{n}{s}b_s U\left( \sum_{t=0}^n \binom{n}{t} W^t \right) = \sum_{s=0}^n\binom{n}{s}b_s \cdot \sum_{t=0}^n\binom{n}{s}a_t,\]
which is the $n$--th term of the sequence $(\mathbf a \star \mathbf 1) \odot (\mathbf b \star \mathbf 1)$.
\end{Remark}

\begin{Theorem}
Given $\mathbf a, \mathbf b \in \mathcal W(R)$, we have that $\mathbf c = \mathbf a \boxtimes \mathbf b \in \mathcal W(R)$ and the characteristic polynomial of $\mathbf c$ is $\prod_{i=1}^M \prod_{j=1}^N(t-(\alpha_i+\beta_j+\alpha_i\beta_j))$, where $M = \deg(p_a(t))$, $N = \deg(p_b(t))$, $\alpha_i$'s are the roots of $p_a(t)$ and $\beta_j$'s the roots of $p_b(t)$. Moreover, $(\mathcal W(R), +, \boxtimes )$ is an $R-$algebra.
\end{Theorem}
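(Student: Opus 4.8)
The plan is to reduce the whole statement to the facts already established for the Hurwitz product (Theorem~\ref{TeoHurwitz}) and the Hadamard product (\cite{CV}, equation~\eqref{eq:polcd}), by exploiting the factorization of Proposition~\ref{def2newton}. Recall from there that, with $\mathbf 1 = (1,1,1,\dots)$ and $\mathbf e = ((-1)^n)_{n\ge 0}$ (both linear recurrent, with characteristic polynomials $t-1$ and $t+1$), one has $\mathbf c = \mathbf a \boxtimes \mathbf b = [(\mathbf a \star \mathbf 1) \odot (\mathbf b \star \mathbf 1)] \star \mathbf e$. First I would settle membership in $\mathcal W(R)$: since $\mathbf 1,\mathbf e\in\mathcal W(R)$, Theorem~\ref{TeoHurwitz} gives $\mathbf a\star\mathbf 1,\ \mathbf b\star\mathbf 1\in\mathcal W(R)$; their Hadamard product lies in $\mathcal W(R)$ by \cite{CV}; and one more application of Theorem~\ref{TeoHurwitz} yields $\mathbf c\in\mathcal W(R)$.

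Next I would compute the characteristic polynomial by chasing roots through the three operations, assuming first that $p_a$ and $p_b$ have distinct roots $\alpha_1,\dots,\alpha_M$ and $\beta_1,\dots,\beta_N$ (the general case handled exactly as indicated in Remark~\ref{rem:hurwitz}, or by specializing a polynomial identity in the coefficients). By Theorem~\ref{TeoHurwitz} together with Remark~\ref{rem:hurwitz}, $\mathbf a\star\mathbf 1$ recurs with a characteristic polynomial having roots $\alpha_i+1$, and $\mathbf b\star\mathbf 1$ with roots $\beta_j+1$. By \eqref{eq:polcd} the Hadamard product $(\mathbf a\star\mathbf 1)\odot(\mathbf b\star\mathbf 1)$ recurs with a characteristic polynomial whose roots are the products $(\alpha_i+1)(\beta_j+1)$. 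Finally, by Theorem~\ref{TeoHurwitz} applied to the Hurwitz product with $\mathbf e$ (root $-1$), $\mathbf c$ recurs with a characteristic polynomial whose roots are $(\alpha_i+1)(\beta_j+1)-1=\alpha_i+\beta_j+\alpha_i\beta_j$; being monic, it equals $\prod_{i=1}^M\prod_{j=1}^N\bigl(t-(\alpha_i+\beta_j+\alpha_i\beta_j)\bigr)$, as claimed.

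For the algebra structure, rather than checking associativity and distributivity of $\boxtimes$ by hand, I would realize $\boxtimes$ as a transport of the Hadamard product. The sequence $\mathbf e$ is the $\star$-inverse of $\mathbf 1$ in the Hurwitz ring, since $(\mathbf 1\star\mathbf e)_n=\sum_{i=0}^n\binom{n}{i}(-1)^{n-i}=(1-1)^n$, so $\mathbf 1\star\mathbf e=(1,0,0,\dots)$. Hence the $R$-linear map $\phi:\mathcal S(R)\to\mathcal S(R)$, $\phi(\mathbf a)=\mathbf a\star\mathbf 1$, is bijective with inverse $\mathbf a\mapsto\mathbf a\star\mathbf e$, and Proposition~\ref{def2newton} rewrites as $\phi(\mathbf a\boxtimes\mathbf b)=\phi(\mathbf a)\odot\phi(\mathbf b)$. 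Thus $\phi$ is an $R$-algebra isomorphism of $(\mathcal S(R),+,\boxtimes)$ onto the $R$-algebra $(\mathcal S(R),+,\odot)$, so $(\mathcal S(R),+,\boxtimes)$ is an $R$-algebra; and since Theorem~\ref{TeoHurwitz} shows that $\phi$ and $\phi^{-1}$ both carry $\mathcal W(R)$ into itself, the restriction of $\phi$ is an $R$-algebra isomorphism between $(\mathcal W(R),+,\boxtimes)$ and $(\mathcal W(R),+,\odot)$; in particular $(\mathcal W(R),+,\boxtimes)$ is an $R$-algebra. The only delicate point in this plan is the characteristic-polynomial formula over a ring in which $p_a$ and $p_b$ need not split: as with Theorem~\ref{TeoHurwitz}, this is resolved by first proving the identity over a splitting ring and then specializing, the formula being polynomial in the coefficients of $p_a$ and $p_b$; everything else follows directly from the quoted results.
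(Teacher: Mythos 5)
Your proposal is correct and follows essentially the same route as the paper: both use the factorization $\mathbf a \boxtimes \mathbf b = [(\mathbf a \star \mathbf 1) \odot (\mathbf b \star \mathbf 1)] \star \mathbf e$ of Proposition~\ref{def2newton} to get closure and the algebra axioms (the paper leaves the axiom check as ``straightforward'', which your transport-of-structure map $\phi(\mathbf a)=\mathbf a\star\mathbf 1$ makes explicit and which anticipates the paper's later isomorphism $(\mathcal W(R),+,\odot)\cong(\mathcal W(R),+,\boxtimes)$), and both obtain the characteristic polynomial by chasing roots $\alpha_i+1$, $\beta_j+1$, then $(\alpha_i+1)(\beta_j+1)$, then $(\alpha_i+1)(\beta_j+1)-1$ via Theorem~\ref{TeoHurwitz}, Remark~\ref{rem:hurwitz} and \eqref{eq:polcd}. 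No gaps beyond those already implicit in the paper's own treatment (multiple roots and non-split $R$, which you address as the paper does via Remark~\ref{rem:hurwitz} and specialization).
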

\begin{proof}
Firstly, we show that $(\mathcal S(R), +, \boxtimes)$ is an $R-$algebra. This is an immediate consequence of Proposition \ref{def2newton}. Indeed, since $\mathbf a \boxtimes \mathbf b = [(\mathbf a \star \mathbf 1) \odot (\mathbf b \star \mathbf 1)] \star \mathbf{e}$, it is straightforward to see that $\boxtimes$ satisfies all the properties in order that $(\mathcal S(R), +, \boxtimes)$ is an $R-$algebra. 
Moreover, we can also see that $(1, 0, 0, \ldots)$ is the identity element for the Newton product. Indeed, it is sufficient to observe that $(1, 0, 0, \ldots)$ is the identity element for the Hurwitz product and $\mathbf{e}$ is the inverse of $\mathbf 1$ with respect to $\star$.
Then, it is immediate that $(\mathcal W(R), +, \boxtimes)$ is also an $R-$algebra, since given $\mathbf a, \mathbf b \in \mathcal W(R)$, we have $\mathbf a \boxtimes \mathbf b \in \mathcal W(R)$ by Proposition \ref{def2newton}.

By Theorem \ref{TeoHurwitz} and Remark \ref{rem:hurwitz}, we can observe that given $\mathbf a, \mathbf b \in \mathcal W(R)$, then $\mathbf a \star \mathbf 1$ and $\mathbf b \star \mathbf 1$ are linear recurrent sequences whose characteristic polynomials have roots $\alpha_i + 1$ and $\beta_j +1$, for $i = 1, \ldots, M$ and $j = 1, \ldots, N$, respectively. Moreover, since $\mathbf e$ is a linear recurrent sequence whose characteristic polynomial is $t + 1$, then $[(\mathbf a \star \mathbf 1) \odot (\mathbf b \star \mathbf 1)] \star \mathbf{e}$ has characteristic polynomial whose roots are $(\alpha_i+1)(\beta_j + 1) - 1 = \alpha_i + \beta_j + \alpha_i\beta_j$, for $i = 1, \ldots, M$ and $j = 1, \ldots, N$. 
\end{proof}

\begin{Proposition}\label{inversinewton}
Given $\mathbf a \in \mathcal S(R)$, said $\mathbf b$ its inverse with respect to the Newton product, then
\begin{equation}\label{formuladefinvnewton}
    b_n = (-1)^n \sum_{t=0}^n \binom{n}{t} (-1)^t \frac{1}{\sum_{s=0}^t \binom{t}{s}a_s}
\end{equation}
for any $n \geq 0$.
\begin{proof} \label{def1inversinewton}
Remembering that the identity element for the Newton product is $(1, 0, 0, \ldots)$, we have that $a_0b_0$ must be $1$, i.e., $b_0 = a_0^{-1}$. When $n \ge 1$, we have that
 \begin{equation*}
   \sum_{i=0}^n \binom{n}{i} (-1)^{n-i} \sum_{s=0}^i \binom{i}{s} a_s \sum_{t=0}^i \binom{i}{t} b_t = 0
 \end{equation*}
and,  then
\begin{equation*}
    \sum_{s=0}^i \binom{i}{s} a_s \sum_{t=0}^i \binom{i}{t} b_t = 1,  \quad
\sum_{t=0}^i \binom{i}{t} b_t = \frac{1}{\sum_{s=0}^i \binom{i}{s} a_s}. 
\end{equation*}
Let $d_i= \frac{1}{\sum_{s=0}^i \binom{i}{s} a_s}$, applying Newton's inversion formula, we get
\begin{equation*}
 d_i =\sum_{t=0}^i \binom{i}{t} (-1)^t (-1)^t b_t, \quad
(-1)^i b_i  = \sum_{t=0}^i \binom{i}{t} (-1)^t d_t 
\end{equation*}
and finally
\begin{equation*}
b_i  = (-1)^i \sum_{t=0}^i \binom{i}{t} (-1)^t \frac{1}{\sum_{s=0}^t \binom{t}{s} a_s} 
\end{equation*}
from which the thesis follows.
\end{proof}
\end{Proposition}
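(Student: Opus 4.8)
The plan is to sidestep the triple sum defining $\boxtimes$ and instead lean on the factorization of Proposition \ref{def2newton}. Recall that $(1,0,0,\ldots)$ is simultaneously the identity for $\boxtimes$ and for the Hurwitz product $\star$, and that $\mathbf e$ is the $\star$-inverse of $\mathbf 1$. By Proposition \ref{def2newton} we have $\mathbf a \boxtimes \mathbf b = \bigl[(\mathbf a \star \mathbf 1) \odot (\mathbf b \star \mathbf 1)\bigr] \star \mathbf e$. Hence, using that $\star$ is commutative and associative, $\mathbf a \boxtimes \mathbf b = (1,0,0,\ldots)$ is equivalent to $(\mathbf a \star \mathbf 1) \odot (\mathbf b \star \mathbf 1) = (1,0,0,\ldots)\star\mathbf 1 = \mathbf 1$: the implication we need follows by convolving both sides with $\mathbf 1$ and using $\mathbf 1 \star \mathbf e = (1,0,0,\ldots)$.

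I would then read this equation componentwise. Setting $A_n := \sum_{s=0}^n \binom{n}{s} a_s$ (the $n$-th term of $\mathbf a \star \mathbf 1$) and $B_n := \sum_{t=0}^n \binom{n}{t} b_t$, the identity $(\mathbf a \star \mathbf 1) \odot (\mathbf b \star \mathbf 1) = \mathbf 1$ becomes $A_n B_n = 1$ for every $n \geq 0$. This simultaneously records the necessary condition that each $A_n$ be a unit of $R$ in order for $\mathbf b$ to exist, and it yields $B_n = A_n^{-1} =: d_n$.

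It then remains to solve $\sum_{t=0}^n \binom{n}{t} b_t = d_n$ for $b_n$, which is a textbook binomial inversion. Writing the left side as $\sum_{t=0}^n \binom{n}{t}(-1)^t\bigl((-1)^t b_t\bigr)$ and applying Newton's inversion formula (valid over any commutative ring, since it rests only on the integer identity $\sum_{k=m}^n\binom{n}{k}\binom{k}{m}(-1)^{k-m}=\delta_{n,m}$) with $f(t)=(-1)^t b_t$ and $g(n)=d_n$, we obtain $(-1)^n b_n = \sum_{t=0}^n \binom{n}{t}(-1)^t d_t$, i.e. $b_n = (-1)^n \sum_{t=0}^n \binom{n}{t}(-1)^t \bigl(\sum_{s=0}^t \binom{t}{s}a_s\bigr)^{-1}$, which is exactly \eqref{formuladefinvnewton}; the case $n=0$ gives $b_0=a_0^{-1}$, consistent with $(1,0,0,\ldots)$ being the $\boxtimes$-identity.

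The genuinely routine steps are the componentwise reading and the single inversion; the one place to be careful is the reduction in the first paragraph — checking that cancelling the outer $\star\,\mathbf e$ turns the condition $\mathbf a\boxtimes\mathbf b=(1,0,0,\ldots)$ into $(\mathbf a\star\mathbf 1)\odot(\mathbf b\star\mathbf 1)=\mathbf 1$, and not into some condition whose constant term vanishes — together with keeping the signs consistent through the inversion. A more pedestrian alternative would be to substitute the claimed $b_n$ directly into the defining sum $\sum_{i=0}^n\sum_{j=0}^i\binom{n}{i}\binom{i}{j}a_ib_{n-j}$ and collapse it with the Vandermonde-type identities used in the proof of Proposition \ref{def2newton}, but that is considerably more laborious and no more illuminating.
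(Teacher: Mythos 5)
Your proof is correct and follows essentially the same route as the paper: reduce via the factorization $\mathbf a \boxtimes \mathbf b = [(\mathbf a \star \mathbf 1) \odot (\mathbf b \star \mathbf 1)] \star \mathbf e$ to the componentwise condition $\bigl(\sum_{s=0}^n \binom{n}{s}a_s\bigr)\bigl(\sum_{t=0}^n \binom{n}{t}b_t\bigr)=1$, then apply Newton's (binomial) inversion to solve for $b_n$. If anything, you are slightly more explicit than the paper at the one delicate point, namely justifying (by convolving with $\mathbf 1$) that the condition becomes $(\mathbf a \star \mathbf 1) \odot (\mathbf b \star \mathbf 1)=\mathbf 1$ rather than merely a vanishing condition, a step the paper passes over with ``and, then''.
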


\begin{Remark}
Let us observe that $\mathbf a$ is invertible with respect to the Newton product if and only if all the elements of $\mathbf a$ are invertible elements of $R$, as well as it happens for the Hadamard product.
\end{Remark}

Let us observe that every $\mathbf a \in \mathcal W(R)$ can be associated to its monic characteristic polynomial $p_a(t)$ with coefficients in $R$ and this polynomial to its companion matrix $A$. As studied the 
$R-$algebras of kind $(\mathcal W(R), +, \odot)$, $(\mathcal W(R), +, *)$, $(\mathcal W(R), +, \star)$ and $(\mathcal W(R), +, \boxtimes)$, it is interesting to give to the set of the monic polynomials $\mathcal Pol(R)$ with coefficients in $R$ some new algebraic structures. Moreover, we can also observe what happens to the roots and to the companion matrices of the characteristic polynomials.

Let us consider $\mathbf a, \mathbf b \in \mathcal W(R)$ with characteristic polynomials of degree $M$ and $N$, whose roots are $\alpha_1, \ldots, \alpha_M$ and $\beta_1, \ldots \beta_N$, respectively.
The sequences $\mathbf a + \mathbf b$ and $\mathbf a * \mathbf b$ both recur with characteristic polynomial $p_a(t) \cdot p_b(t)$.

Regarding the Hadamard product, we have already observed that the characteristic polynomial of $\mathbf c = \mathbf a \odot \mathbf b$ is $p_c(t) = p_a(t) \otimes p_b(t)$, whose roots are $\alpha_i\beta_j$, for $i = 1, \ldots, M$ and $j = 1, \ldots, N$. Thus, starting from the $R-$algebra $(\mathcal W(R), +, \odot)$, we can construct the semiring $(\mathcal Pol(R), \cdot, \otimes)$ with identity the polynomial $t-1$. 
Said $A, B,$ and $C$ the companion matrices of $p_a(t), p_b(t)$ and $p_c(t)$, we have that $C = A \otimes B$, where $\otimes$ is the Kronecker product between matrices. Thus $C$ is a $mn \times mn$ matrix with eigenvalues the products of the eigenvalues of $A$ and $B$.

Similarly, starting from the Hurwitz product, we can construct a new operation in $\mathcal Pol(R)$. Given $\mathbf c = \mathbf a \star \mathbf b$, we proved that $p_c(t)$ has roots $\alpha_i + \beta_j$, for $i = 1, \ldots, M$ and $j= 1, \ldots, N$. The matrix $A \otimes I_n + I_m \otimes B$ is a $mn \times mn$ matrix, whose eigenvalues are the sum of the eigenvalues of $A$ and $B$. Thus, we can define $p_c(t) = p_a(t) \star p_b(t)$ as the characteristic polynomial of the matrix $A \otimes I_n + I_m \otimes B$ and we get the semiring $(\mathcal Pol(R), \cdot, \star)$.

Finally, given $\mathbf c = \mathbf a \boxtimes \mathbf b$, we know that $p_c(t)$ has roots $\alpha_i + \beta_j + \alpha_i\beta_j$, for $i = 1, \ldots, M$ and $j = 1, \ldots, N$. In this case, we can define $p_c(t) = p_a(t) \boxtimes p_b(t)$ as the characteristic polynomial of the matrix $A \otimes I_n + I_m \otimes B + A \otimes B$, which is a $mn \times mn$ matrix, whose eigenvalues are exactly $\alpha_i + \beta_j + \alpha_i\beta_j$, for $i = 1, \ldots, M$ and $j = 1, \ldots, N$. Thus, we have that $(\mathcal Pol(R), \cdot, \boxtimes)$ is another semiring of monic polynomials.

\section{On isomorphisms between R-algebras}

In \cite{CV}, the authors proved that $(\mathcal W(R), +, \odot)$ and $(\mathcal W(R), +, *)$ are never isomorphic as $R-$algebras. In the following we prove similar results for the other algebraic structures that we have studied in the previous section.

\begin{Theorem}
The $R-$algebras $(\mathcal W(R), +, \odot)$ and $(\mathcal W(R), +, \star)$ are not isomorphic.
\end{Theorem}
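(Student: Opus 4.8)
The plan is to find an algebraic invariant that distinguishes the two $R$-algebras. The most natural candidate is the behaviour of idempotents, or more robustly, the structure of nilpotent elements and the presence of a particularly rigid element. For $(\mathcal W(R),+,\odot)$ the Hadamard product has a very large supply of idempotents: any sequence $\mathbf a$ all of whose entries are idempotents of $R$ (e.g. the constant sequences $(1,1,1,\dots)$, $(0,1,1,\dots)$ shifted, and so on, as long as these are linear recurrent, which the eventually-constant ones are) is idempotent for $\odot$. By contrast, under $\star$ (the Hurwitz product) the identity is $(1,0,0,\dots)$ and, reasoning with exponential generating functions, $\mathbf a\star\mathbf a=\mathbf a$ forces $A_e(t)^2=A_e(t)$, so $A_e(t)\in\{0,1\}$ when $R$ has no nontrivial idempotents; even over a general $R$ the only idempotents are the ``constant'' e.g.f.\ ones, giving a strictly smaller and differently-structured set. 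So the first step is: \emph{classify the idempotents of $(\mathcal W(R),+,\star)$} and show there are strictly fewer (or a structurally different lattice of) idempotents than in $(\mathcal W(R),+,\odot)$, at least for a suitable choice of $R$ such as $R=\mathbb{Z}$ or a field.

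Since an $R$-algebra isomorphism is in particular a ring isomorphism preserving the identity, it must carry idempotents to idempotents bijectively. So if the two idempotent sets have different cardinality — or different poset structure under the natural order $e\le f\iff ef=e$ — we are done. Concretely, over $R=\mathbb{Q}$: in $(\mathcal W(\mathbb{Q}),+,\odot)$ the idempotents are exactly the $0$--$1$ valued eventually-periodic sequences (these form an infinite Boolean-algebra-like lattice), whereas in $(\mathcal W(\mathbb{Q}),+,\star)$ the only idempotents are $\mathbf 0$ and $\mathbf 1_{\star}=(1,0,0,\dots)$, because $A_e(t)^2=A_e(t)$ in $\mathbb{Q}[[t]]$ has only the solutions $0$ and $1$. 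That single observation already blocks an isomorphism. The second step is therefore just to record this contradiction cleanly: a ring isomorphism would restrict to a bijection between a two-element set and an infinite set.

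The main obstacle is the generality ``for any commutative ring $R$ with identity'': if $R$ itself has lots of idempotents the Hadamard and Hurwitz idempotent sets could superficially look similar in size, so the clean e.g.f.\ argument above needs care. I would handle this by passing to the structure of the nilradical or, better, by using a dimension/annihilator count: over a field one compares the number of minimal idempotents (equivalently the number of ``primitive'' characteristic polynomials of the form $t-\zeta$ that the algebra detects), and over a general $R$ one reduces modulo a maximal ideal $\mathfrak m$ of $R$, noting that an $R$-algebra isomorphism induces an $R/\mathfrak m$-algebra isomorphism of the base-changed algebras $\mathcal W(R)\otimes_R R/\mathfrak m$ (with a small argument that this base change is compatible with $\odot$ and $\star$, using the explicit characteristic-polynomial formulas from Theorem~\ref{TeoHurwitz} and from \eqref{eq:polcd}). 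Then the field case applies and yields the contradiction. So the skeleton is: (i) describe $\star$-idempotents via e.g.f.'s; (ii) describe $\odot$-idempotents via $0$--$1$ eventually-periodic sequences; (iii) observe an isomorphism must biject these sets; (iv) reduce to a field if necessary and count, obtaining a contradiction. The only delicate point is step (iv)'s base-change bookkeeping; everything else is immediate from the material already developed.
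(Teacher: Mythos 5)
Your strategy---compare the sets of idempotents, which any ring isomorphism must put in bijection---is viable and genuinely different from the paper's argument: the paper takes the disjointly supported sequences $(1,0,0,\dots)$ and $(0,1,0,0,\dots)$, whose Hadamard product is zero, and derives a contradiction with injectivity via Lemma \ref{Lemma3}, the isomorphism $\gamma$ between ordinary and Hurwitz series, and a Gilmer/McCoy-type annihilator result, thereby proving the stronger statement that no \emph{injective} morphism exists at all. However, as written your proposal has a concrete gap at its central step. Your classification of the $\star$-idempotents rests on exponential generating functions, i.e.\ on $\mathbf a\star\mathbf a=\mathbf a\iff A_e(t)^2=A_e(t)$, which presupposes that every $n!$ is invertible; but your own reduction step (iv) passes to the residue field $R/\mathfrak m$, which may (and for $R=\mathbb F_p$ must) have positive characteristic, where the e.g.f.\ dictionary is unavailable. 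So the claim ``only the trivial $\star$-idempotents'' is unproved in exactly the cases your reduction produces, and your hedge ``even over a general $R$ the only idempotents are the constant e.g.f.\ ones'' is an assertion, not an argument. The claim is in fact true over any field and can be proved directly from the binomial convolution: $a_0^2=a_0$ forces $a_0\in\{0,1\}$; if $a_0=1$ and $a_1=\dots=a_{n-1}=0$, the idempotency condition at index $n$ reads $2a_n=a_n$, hence $a_n=0$ and induction finishes; if $a_0=0$ and $k\ge 1$ is minimal with $a_k\neq 0$, the condition at index $k$ gives $a_k=\sum_{i=0}^{k}\binom{k}{i}a_ia_{k-i}=0$, a contradiction. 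Some such characteristic-free argument must be inserted, otherwise you have only proved the theorem when $R$ contains $\mathbb Q$ (or admits a residue field of characteristic $0$), which is weaker than the statement.

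The base-change step also needs to be carried out rather than gestured at, though it does work: the termwise reduction map $\mathcal W(R)\to\mathcal W(R/\mathfrak m)$ is a homomorphism for $+$, $\odot$ and $\star$ (the products are given by universal integer formulas), it is surjective (lift the monic characteristic polynomial and the initial conditions arbitrarily), and its kernel is $\mathfrak m\,\mathcal W(R)$, because a recurrent sequence all of whose entries lie in $\mathfrak m$ is the $R$-linear combination, with coefficients its initial values, of the fundamental solutions of its own recurrence; finally $\psi(\mathfrak m\,\mathcal W(R))=\mathfrak m\,\mathcal W(R)$ by $R$-linearity, so $\psi$ descends to a ring isomorphism over the field $R/\mathfrak m$. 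With these two points filled in, your count (infinitely many eventually periodic $0$--$1$ sequences as $\odot$-idempotents versus only $\mathbf 0$ and $(1,0,0,\dots)$ for $\star$) does yield the contradiction, and the resulting proof is a legitimate alternative to the paper's, at the cost of proving only non-isomorphism rather than the paper's non-existence of injective morphisms.
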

\begin{proof}

Let us suppose that $\psi:(\mathcal W(R), +, \odot) \xrightarrow{}(\mathcal W(R), +, \star)$ is an injective morphism and consider $\mathbf a:=(1, 0, 0, 0, \dots)$ and $\mathbf b:=(0,1, 0, 0, \dots)$.\\
Then $\psi (\mathbf a \odot \mathbf b ) = \psi (\mathbf a ) \star \psi (\mathbf b )= (0, 0, \ldots)$ and, by injectivity, $\psi (\mathbf a ), \psi (\mathbf b ) \ne (0,0, \ldots)$. Let $A_e^{\psi}(t)$ and $B_e^{\psi}(t)$ be the exponential generating functions of $\psi (\mathbf a)$ and $\psi (\mathbf b)$, respectively.  From $\psi (\mathbf a ) \star \psi (\mathbf b )=(0,0, \ldots)$, it follows that $A_e^{\psi}(t)B_e^{\psi}(t)=0$. From Lemma \ref{Lemma3}, we have
\begin{equation}\label{eq:8}
    p^*_{\psi(\mathbf a)}(t)A^{\psi}_o(t)=h(t)
\end{equation}
with $deg(h(t)) < deg(p^*_{\psi(\mathbf a)}(t)) $. \\
Now, consider the map $\gamma: \sum_{n=0}^\infty a_nt^n \xrightarrow{} \sum_{n=0}^\infty \frac{a_n}{n!}t^n$, which is an isomorphism between the ring of ordinary series and the Hurwitz series ring, see, e.g., \cite{BCM3}.
Applying $\gamma$ to \eqref{eq:8}, we obtain
\begin{align*}
     \gamma \left (p^*_{\psi(\mathbf a)}(t)\right) A^{\psi}_e(t) &=\gamma (h(t)) \nonumber\\
\end{align*}
where $p^*_{\psi(\mathbf a)}(t)$ can be viewed as a formal series with an infinite number of zero coefficients.
Multiplying by $\gamma (p^*_{\psi(\mathbf a)}(t))$ the equation $A_e^{\psi}(t)B_e^{\psi}(t)=0$, it becomes
\begin{align*}
   \gamma (h(t))B_e^{\psi}(t)&=0
\end{align*}
which implies $h(t)B_o^{\psi}(t)=0$. From this, it follows that there is a nonzero element $w \in R$, such that $wB_o^{\psi}(t)=0$ (\cite[Eq. (2.9)]{GIL}) and $w \mathbf b =0$, which is absurd.
\end{proof}

\begin{Theorem}
The $R-$algebras $(\mathcal W(R), +, \odot)$ and $(\mathcal W(R), +, \boxtimes)$ are isomorphic.
\end{Theorem}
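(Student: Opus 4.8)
The plan is to exhibit an explicit isomorphism $\psi\colon (\mathcal W(R), +, \odot) \to (\mathcal W(R), +, \boxtimes)$ defined by $\psi(\mathbf a) = \mathbf a \star \mathbf e$, where $\mathbf e = ((-1)^n)_{n\ge 0}$. The choice is dictated by the behaviour of the characteristic polynomials: by Theorem \ref{TeoHurwitz} and Remark \ref{rem:hurwitz}, convolving with $\mathbf e$ (whose characteristic polynomial is $t+1$) sends a linear recurrent sequence with roots $\alpha_i$ to one with roots $\alpha_i - 1$, and the substitution $x \mapsto x-1$ conjugates the product $\alpha_i\beta_j$ of $\odot$-roots to the value $\alpha_i\beta_j - 1$, which matches the $\boxtimes$-root relation $\alpha_i + \beta_j + \alpha_i\beta_j = (\alpha_i+1)(\beta_j+1) - 1$ after the shift. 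So $\psi$ is the natural candidate.

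First I would check that $\psi$ is a well-defined bijection of $\mathcal W(R)$. Since $\mathbf e \in \mathcal W(R)$, Theorem \ref{TeoHurwitz} gives $\mathbf a \star \mathbf e \in \mathcal W(R)$; the same theorem shows that $\phi\colon \mathbf c \mapsto \mathbf c \star \mathbf 1$ maps $\mathcal W(R)$ into itself. Using associativity and commutativity of $\star$ together with the fact (recalled in the proof of the Newton theorem) that $\mathbf e \star \mathbf 1 = (1,0,0,\ldots)$ is the $\star$-identity, one gets $\phi\circ\psi = \mathrm{id}$ and $\psi\circ\phi = \mathrm{id}$, so $\psi$ is a bijection. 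Next, $\psi$ is additive and $R$-linear because the Hurwitz product is bilinear: $(\mathbf a + \mathbf b)\star\mathbf e = \mathbf a\star\mathbf e + \mathbf b\star\mathbf e$ and $(r\mathbf a)\star\mathbf e = r(\mathbf a\star\mathbf e)$.

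The crucial step is multiplicativity, $\psi(\mathbf a \odot \mathbf b) = \psi(\mathbf a) \boxtimes \psi(\mathbf b)$. Applying Proposition \ref{def2newton} to the sequences $\mathbf a \star \mathbf e$ and $\mathbf b \star \mathbf e$ (both in $\mathcal W(R)$), and using associativity of $\star$ with $\mathbf e \star \mathbf 1 = (1,0,0,\ldots)$, we obtain
\[
(\mathbf a \star \mathbf e) \boxtimes (\mathbf b \star \mathbf e) = \bigl[\bigl((\mathbf a \star \mathbf e)\star\mathbf 1\bigr) \odot \bigl((\mathbf b \star \mathbf e)\star\mathbf 1\bigr)\bigr] \star \mathbf e = (\mathbf a \odot \mathbf b) \star \mathbf e = \psi(\mathbf a \odot \mathbf b).
\]
Finally, $\psi$ carries the identity $\mathbf 1$ of $(\mathcal W(R), +, \odot)$ to the identity $(1,0,0,\ldots)$ of $(\mathcal W(R), +, \boxtimes)$, since $\mathbf e$ is the $\star$-inverse of $\mathbf 1$. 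Hence $\psi$ is an isomorphism of $R$-algebras. I do not anticipate a real obstacle: once the map $\mathbf a \mapsto \mathbf a \star \mathbf e$ is singled out, the verification reduces entirely to Proposition \ref{def2newton} and the elementary bilinear/associative algebra of the Hurwitz product; the only genuinely creative point is identifying this candidate, which the root computation makes transparent.
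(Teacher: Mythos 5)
Your proposal is correct and follows essentially the same route as the paper: the same map $\psi(\mathbf a) = \mathbf a \star \mathbf e$, bijectivity via the $\star$-inverse $\mathbf 1$ of $\mathbf e$, and multiplicativity reduced to Proposition \ref{def2newton} together with $\mathbf e \star \mathbf 1 = (1,0,0,\ldots)$. Your write-up is in fact slightly more explicit than the paper's (spelling out the inverse map $\mathbf c \mapsto \mathbf c \star \mathbf 1$ and $R$-linearity), but there is no substantive difference.
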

\begin{proof}
The explicit isomorphism is $\psi: (\mathcal W(R), +, \odot) \rightarrow (\mathcal W(R), +, \boxtimes)$ defined by $\psi(\mathbf a) := \mathbf a \star \mathbf e$, where $\mathbf e = ((-1)^n)_{n \geq 0}$. Indeed, by Theorem \ref{TeoHurwitz}, the map $\psi$ is well--defined (in the sense that a linear recurrent sequence is mapped into a linear recurrent sequence). Moreover, since $\mathbf 1$ is the inverse of $\mathbf e$ with respect to the Hurwitz product, it is straightforward to check injectivity and surjectivity. Finally, by Proposition \ref{def2newton}, we have
\[\psi(\mathbf a) \boxtimes \psi(\mathbf b) = (((\mathbf a \star \mathbf e) \star \mathbf 1) \odot ((\mathbf b \star \mathbf e) \star \mathbf 1) )\star \mathbf e\]
that is equal to
\[\psi(\mathbf a \odot \mathbf b) = (\mathbf a \odot \mathbf b) \star \mathbf e\]
since $\mathbf e \star \mathbf 1 = (1, 0, 0, \ldots)$.
\end{proof}

\begin{Theorem}
Let $R$ be an integral domain, if $\psi: (\mathcal W(R), +, *) \rightarrow (\mathcal W(R), +, \star)$  is a morphism, then $\psi$ is not injective.
\end{Theorem}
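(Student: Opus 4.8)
The plan is to assume, for a contradiction, that $\psi$ is an injective morphism of $R$-algebras (so $\psi$ fixes the common identity $\mathbf u:=(1,0,0,\dots)$, this being the unique nonzero idempotent of each side) and to split the argument according to the characteristic of $R$. The single structural fact used throughout is that $(\mathcal W(R),+,*)$ is an \emph{integral domain}: under the ordinary generating function correspondence it embeds into $R[[t]]$, which is a domain since $R$ is. In particular, any element of the source that $\psi$ sends to $\mathbf 0$ must itself be $\mathbf 0$, and any element of the source that $\psi$ sends to a nilpotent element of the target must be nilpotent, hence $\mathbf 0$. Throughout, write $\mathbf s:=(0,1,0,0,\dots)\in\mathcal W(R)$ and $\mathbf w:=\psi(\mathbf s)$.

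Suppose first that $R$ has characteristic $p>0$. The key observation is that in $(\mathcal S(R),+,\star)$ every sequence $\mathbf b$ with $b_0=0$ satisfies $\mathbf b^{\star p}=\mathbf 0$. This is an elementary count: grouping the tuples $(i_1,\dots,i_p)$ with $i_1+\cdots+i_p=n$ by their underlying multiset, the contribution of a multiset all of whose entries are $\geq 1$ to the $n$-th term of $\mathbf b^{\star p}$ equals (number of ordered tuples realizing that multiset, a multinomial coefficient $\binom{p}{m_1,\dots,m_k}$ in the multiplicities $m_\ell$) times the multinomial $\binom{n}{i_1,\dots,i_p}$ times a product of terms of $\mathbf b$; if the $i_k$ are not all equal then every $m_\ell<p$ and the first factor is divisible by $p$, whereas if all $i_k$ equal $j$ then $n=pj$ and the second factor is $\binom{pj}{j,\dots,j}$, whose $p$-adic valuation is the base-$p$ digit sum of $j\geq 1$ and so is $\geq 1$. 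Now set $\mathbf z:=\mathbf s-w_0\mathbf u$, which is nonzero since its term of index $1$ is $1$. Then $\psi(\mathbf z)=\mathbf w-w_0\mathbf u$ has vanishing term of index $0$, so $\psi(\mathbf z)^{\star p}=\mathbf 0$, i.e. $\psi(\mathbf z^{*p})=\mathbf 0$; injectivity forces $\mathbf z^{*p}=\mathbf 0$, contradicting that the source is a domain.

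Now suppose $R$ has characteristic $0$. Then the target has no nonzero nilpotents either (its exponential generating functions lie in $K[[t]]$ with $K$ the fraction field of $R$, a domain), so the previous trick is unavailable; instead I would exploit the units. The crucial step — which I expect to be the main obstacle — is the description of the group of units: \emph{a sequence $\mathbf v\in\mathcal W(R)$ is invertible for $\star$ if and only if $\mathbf v=c\,(\lambda^n)_{n\geq 0}$ for some $c\in R^\times$ and $\lambda\in R$}, equivalently its exponential generating function equals $c\,e^{\lambda t}$. I would prove this by base-changing to $K\supseteq\mathbb Q$ and writing the e.g.f. of $\mathbf v$ as a finite exponential polynomial $\sum_i Q_i(t)e^{\mu_i t}$ with the $\mu_i$ distinct and $Q_i\neq 0$; if the reciprocal were another such sum $\sum_j Q'_j(t)e^{\mu'_j t}$, then in the product the exponents lying at vertices of the Minkowski sum of the two exponent sets are attained by a unique pair and carry the nonzero polynomial coefficient $Q_iQ'_j$, so requiring the product to equal the constant $1$ (and using linear independence of $\{e^{\nu t}\}_\nu$ over $K[t]$) forces each exponent set to collapse to a single point and each $Q_i,Q'_j$ to be constant.

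Granting this lemma, I would finish as follows. For every $\beta\in R$ the sequence $\mathbf u-\beta\mathbf s=(1,-\beta,0,0,\dots)$ is a unit of $(\mathcal W(R),+,*)$, with inverse the geometric sequence $(\beta^n)_{n\geq 0}\in\mathcal W(R)$; hence $\psi(\mathbf u-\beta\mathbf s)=\mathbf u-\beta\mathbf w$ is a unit of $(\mathcal W(R),+,\star)$, so by the lemma $\mathbf u-\beta\mathbf w=c_\beta(\lambda_\beta^n)_{n\geq 0}$ with $c_\beta\in R^\times$ and $\lambda_\beta\in R$. Comparing terms gives $c_\beta=1-\beta w_0$ and $-\beta w_n=(1-\beta w_0)\lambda_\beta^n$ for all $n\geq 1$. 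Taking $\beta=1$ yields $w_0=1-c$ and $w_n=-c\lambda^n$ for $n\geq 1$, where $c:=1-w_0\in R^\times$ and $\lambda:=\lambda_1$. Substituting this back for some $\beta\in\{2,3,\dots\}\subseteq R$ (available since $\operatorname{char}R=0$) gives $\beta c\lambda^n=(1-\beta+\beta c)\lambda_\beta^n$ for all $n\geq 1$: if $\lambda\neq 0$ the left-hand side never vanishes, and comparing the $n=1$ and $n=2$ identities (cross-multiplying, since $R$ is a domain) forces $\lambda_\beta=\lambda$ and then $\beta=1$, a contradiction; if $\lambda=0$ then $\mathbf w=(1-c)\mathbf u=\psi\big((1-c)\mathbf u\big)$, so injectivity gives $\mathbf s=(1-c)\mathbf u$, which is absurd. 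In either case we reach a contradiction, so no injective morphism $\psi$ exists.
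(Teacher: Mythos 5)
Your proposal is correct (reading ``morphism'' as morphism of $R$-algebras, as the paper intends, so that you may use $R$-linearity in steps like $\psi(w_0\mathbf u)=w_0\mathbf u$ and $\psi(\beta\mathbf s)=\beta\mathbf w$; your identification of $\psi(\mathbf u)=\mathbf u$ via the unique nonzero $\star$-idempotent over a domain is also fine), but it follows a genuinely different route from the paper's. The paper does not split by characteristic: it conjugates $\psi$ by the Hadamard factor $\mathbf f=(n!)_{n\geq 0}$, so that $\tau(\mathbf a)=\psi(\mathbf a)\odot\mathbf f^{-1}$ becomes an injective, sum- and Cauchy-product-preserving self-map of $(\mathcal W(R),+,*)$; it then transports $\tau$ to ordinary generating functions, deduces from $\tilde\tau(t^2)=\tilde\tau(t)^2$ and injectivity that $\tilde\tau(t)=t^k$ for some $k\geq 1$, and concludes that $\psi(\mathbf 1)_n$ equals $n!$ when $k\mid n$ and $0$ otherwise, which is not linearly recurrent --- a contradiction. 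You instead branch on $\operatorname{char}R$: in characteristic $p$ you play the nilpotency $\mathbf b^{\star p}=\mathbf 0$ for $b_0=0$ (your multiset count and the digit-sum evaluation of $v_p\binom{pj}{j,\dots,j}$ are correct) against the fact that the Cauchy side embeds in the domain $R[[t]]$; in characteristic $0$ you determine the $\star$-unit group of $\mathcal W(R)$ --- units are exactly $c(\lambda^n)_{n\geq 0}$ with $c\in R^\times$, $\lambda\in R$ --- via exponential polynomials over $\bar K$ and then compare images of the Cauchy units $(1,-\beta,0,\dots)$. As for what each buys: the paper's argument is shorter and uniform in appearance, but as written it manipulates $\mathbf f^{-1}=(1/n!)_{n\ge 1}$ and its final contradiction (``$n!$ along multiples of $k$ is not an LRS'') is only really compelling in characteristic zero, where factorials are nonzero; your characteristic-$p$ branch covers that ground cleanly, and your unit-group lemma is of independent interest, at the cost of heavier machinery (closed forms of LRS e.g.f.'s over $\bar K$, linear independence of $\{e^{\nu t}\}$ over $\bar K[t]$, the extreme-point/Minkowski-sum uniqueness argument). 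In a final write-up you should supply the details you only sketch: the exponential-polynomial form of the e.g.f. of an element of $\mathcal W(R)$ in characteristic $0$ (including the polynomial part coming from a possible root $0$ of the characteristic polynomial), the independence statement, and the one-line induction showing $\mathbf u$ is the only nonzero $\star$-idempotent over a domain.
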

\begin{proof}
Let us suppose that $\psi:(\mathcal W(R), +, *) \rightarrow (\mathcal W(R), +, \star)$ is an injective morphism. Let us denote by $(\psi(\mathbf a))_i$ the $i$-th term of the sequence $\psi(\mathbf a)$. The $n$-th term of $\psi(\mathbf a) \star \psi(\mathbf b)$ is
\begin{equation*}
    \sum_{i=0}^n \binom{n}{i}(\psi(\mathbf{a}))_i(\psi(\mathbf{b}))_{n-i}= n!\sum_{i=0}^n \binom{n}{i}\frac{(\psi(\mathbf{a}))_i}{i!}\frac{(\psi(\mathbf{b}))_{n-i}}{(n-i)!}.
\end{equation*}
Then, considering $\psi(\mathbf a * \mathbf b) = \psi(\mathbf a) \star \psi(\mathbf b)$, for any $\mathbf a, \mathbf b \in \mathcal W(R)$, we obtain
\begin{equation} \label{riscr}
    \psi(\mathbf{a}*\mathbf{b})=((\psi(\mathbf{a}) \odot \mathbf{f}^{-1})*(\psi(\mathbf{b}) \odot \mathbf{f}^{-1})) \odot \mathbf{f}
\end{equation}
where we define the formal sequences $\mathbf{f}:=(1, 2!, 3!, \ldots)$ and $\mathbf{f}^{-1}:=(1, \frac{1}{2!}, \frac{1}{3!}, \ldots)$.

We define a map $\tau:(\mathcal W(R), +, *) \xrightarrow{}(\mathcal W(R), +, *)$ such that $\tau(\mathbf a) = \psi(\mathbf a) \odot \mathbf f^{-1}$. By definition, we have that $\tau( \mathbf{a}+ \mathbf{b} )= \tau( \mathbf{a})+\tau( \mathbf{b} )$,  $\tau( \mathbf{a} * \mathbf{b} )= \tau( \mathbf{a})*\tau( \mathbf{b} )$, for any $\mathbf a, \mathbf b \in \mathcal W(R)$ and $\ker (\tau)= \mathbf{0}$ where $\mathbf{0}= (0, 0, 0, \ldots)$.

Let $\Tilde{\tau}$ be a map that acts over the ordinary generating functions such that if $A(t)=\sum_{n=0}^{+\infty} a_nt^n$ then $\Tilde{\tau}(A(t))=\sum_{n=0}^{+\infty}(\tau(a))_nt^n$. From the properties of $\tau$, it follows that $\Tilde{\tau}(A(t)+B(t))=\Tilde{\tau}(A(t))+\Tilde{\tau}(B(t))$, $\Tilde{\tau}(A(t)B(t))=\Tilde{\tau}(A(t))\Tilde{\tau}(B(t))$ and $\Tilde{\tau}(A(t))=\Tilde{\tau}(B(t)) \Leftrightarrow A(t)=B(t)$. 

Now, if we consider $A(t)=1$ and $B(t)=1$, we clearly have $\Tilde{\tau}(1)=\Tilde{\tau}(1 \cdot 1)=\Tilde{\tau}(1) \Tilde{\tau}(1)$ and this implies $\Tilde{\tau}(1)=1$. Indeed, by the injectivity of $\tau$, we can not have $\Tilde{\tau}(1)=0$ because $\tau(1, 0, \ldots)$ should be $\mathbf{0}$. 

In the case that $A(t)=t$ and $B(t)=-t$, then  $0= \Tilde{\tau}(0)=\Tilde{\tau}(t -t)=\Tilde{\tau}(t) +\Tilde{\tau}(-t)$, so $\Tilde{\tau}(-t)=-\Tilde{\tau}(t)$. Moreover, when $A(t)=t$ and $B(t)=t^{-1}$, then  $1= \Tilde{\tau}(1)=\Tilde{\tau}(t \cdot t^{-1})=\Tilde{\tau}(t)\Tilde{\tau}(t^{-1})$ and $\Tilde{\tau}(t^{-1})=( \Tilde{\tau}(t))^{-1}$. 

Lastly, if $A(t)=B(t)=t$, then  $\Tilde{\tau}(t^2)=\Tilde{\tau}(t \cdot t)=\Tilde{\tau}(t)\Tilde{\tau}(t)$, so $\Tilde{\tau}(t^2)=( \Tilde{\tau}(t))^2$ and $\Tilde{\tau}(nt)=n\Tilde{\tau}(t)$, $\forall n \in \mathbb{N}$.

Let us consider $\Tilde{\tau}(t)=\sum_{n=0}^{+\infty} s_nt^n$, then from $\Tilde{\tau}(t^2)=( \Tilde{\tau}(t))^2$ it follows that 
\begin{equation}\label{condizioni}
    \sum_{n=0}^{+\infty} s_nt^n= \sum_{n=0}^{+\infty} \left(\sum_{i=0}^n s_is_{n-i}\right)t^n \Rightarrow
    \begin{cases}
    \sum_{i=0}^{2k+1} s_is_{2k+1-i}=0 \\
    \sum_{i=0}^{2k} s_is_{2k-i}=s_k
    \end{cases}
\end{equation}
From \eqref{condizioni}, we obtain $s_0=s_0^2$ and we may have $s_0=0$ or $s_0=1$. 
In the case that $s_0=1$, then $s_i=0$, $\forall i \ge 1$, and $\Tilde{\tau}(t)=t$.
Whereas, if $s_0=0$, then $s_1=s_1^2$ and $s_1=0$ or $s_1=1$. In the case that $s_1=1$, then $s_i=0$, $\forall i \ge 2$, and $\Tilde{\tau}(t)=t$.\\
In the case that $s_1=0$, then $s_2=s_2^2$ so $s_2=0$ or $s_2=1$. Repeating the same reasoning and exploiting \eqref{condizioni}, we get in general that $\Tilde{\tau}(t)=t^k$ must hold for a fixed $k \ge 1$. 

Let us consider $A(t)=\frac{1}{1-t}$, then 
$$\Tilde{\tau}((1-t)^{-1})=(\Tilde{\tau}(1)-\Tilde{\tau}(t))^{-1}=(1-\Tilde{\tau}(t))^{-1}=(1-t^k)^{-1}.$$
By definition, $\Tilde{\tau} (\frac{1}{1-t})=\Tilde{\tau}(\sum_{n=0}^{+\infty} t^n)= \sum_{n=0}^{+\infty} (\tau (\mathbf{1}))_nt^n$ and $\Tilde{\tau} (\frac{1}{1-t})=\sum_{n=0}^{+\infty} t^{kn}$. Comparing the coefficients, we have 
\[ \left( \Tilde{\tau} \left( \frac{1}{1-t} \right) \right) _n = \begin{cases}  1 \quad \text{if $k \mid n$} \cr 0 \quad \text{if $k \nmid n$} \end{cases}.\]

Thus, by \eqref{riscr} and the definition of $\tau$ and $\Tilde{\tau}$, we have
\[ \psi(\mathbf 1)_n = (\tau(\mathbf 1) \odot \mathbf f)_n = \begin{cases}  n! \quad \text{if $k \mid n$} \cr 0 \quad \text{if $k \nmid n$} \end{cases}  \]
which is not a linear recurrent sequence.
\end{proof}

\section*{Conflict of interest}
The authors assert that there are no conflicts of interest.

\end{document}